\newtheorem*{thm*}{Theorem A}
\newtheorem{thm}{Theorem}
\newtheorem{exam}{Example}
\newtheorem{lemma}{Lemma}
\newtheorem{remark}{Remark}
\newtheorem{cor}{Corollary}
\newtheorem{prop}{Proposition}
\author{A. Aghajani\thanks{School of Mathematics, Iran University of Science and Technology, Narmak, Tehran, Iran. Email: aghajani@iust.ac.ir.} \and
    C. Cowan\thanks{Department of Mathematics, University of Manitoba, Winnipeg, Manitoba, Canada R3T 2N2. Email: craig.cowan@umanitoba.ca. Research supported in part by NSERC.} }
\begin{document}

\def\d{ \partial_{x_j} }
\def\Na{{\mathbb{N}}}

\def\Z{{\mathbb{Z}}}

\def\IR{{\mathbb{R}}}

\newcommand{\E}[0]{ \varepsilon}

\newcommand{\la}[0]{ \lambda}

\newcommand{\s}[0]{ \mathcal{S}}

\newcommand{\AO}[1]{\| #1 \| }

\newcommand{\BO}[2]{ \left( #1 , #2 \right) }

\newcommand{\CO}[2]{ \left\langle #1 , #2 \right\rangle}

\newcommand{\R}[0]{ \IR\cup \{\infty \} }

\newcommand{\co}[1]{ #1^{\prime}}

\newcommand{\p}[0]{ p^{\prime}}

\newcommand{\m}[1]{   \mathcal{ #1 }}

\newcommand{ \W}[0]{ \mathcal{W}}

\newcommand{ \A}[1]{ \left\| #1 \right\|_H }

\newcommand{\B}[2]{ \left( #1 , #2 \right)_H }

\newcommand{\C}[2]{ \left\langle #1 , #2 \right\rangle_{  H^* , H } }

 \newcommand{\HON}[1]{ \| #1 \|_{ H^1} }

\newcommand{ \Om }{ \Omega}

\newcommand{ \pOm}{\partial \Omega}

\newcommand{\D}{ \mathcal{D} \left( \Omega \right)}

\newcommand{\DP}{ \mathcal{D}^{\prime} \left( \Omega \right)  }

\newcommand{\DPP}[2]{   \left\langle #1 , #2 \right\rangle_{  \mathcal{D}^{\prime}, \mathcal{D} }}

\newcommand{\PHH}[2]{    \left\langle #1 , #2 \right\rangle_{    \left(H^1 \right)^*  ,  H^1   }    }

\newcommand{\PHO}[2]{  \left\langle #1 , #2 \right\rangle_{  H^{-1}  , H_0^1  }}

 \newcommand{\HO}{ H^1 \left( \Omega \right)}

\newcommand{\HOO}{ H_0^1 \left( \Omega \right) }

\newcommand{\CC}{C_c^\infty\left(\Omega \right) }

\newcommand{\N}[1]{ \left\| #1\right\|_{ H_0^1  }  }

\newcommand{\IN}[2]{ \left(#1,#2\right)_{  H_0^1} }

\newcommand{\INI}[2]{ \left( #1 ,#2 \right)_ { H^1}}

\newcommand{\HH}{   H^1 \left( \Omega \right)^* }

\newcommand{\HL}{ H^{-1} \left( \Omega \right) }

\newcommand{\HS}[1]{ \| #1 \|_{H^*}}

\newcommand{\HSI}[2]{ \left( #1 , #2 \right)_{ H^*}}

\newcommand{\WO}{ W_0^{1,p}}
\newcommand{\w}[1]{ \| #1 \|_{W_0^{1,p}}}

\newcommand{\ww}{(W_0^{1,p})^*}

\newcommand{\Ov}{ \overline{\Omega}}

\title{A note on the nonexistence of positive supersolutions to elliptic equations with gradient terms}
\maketitle

\begin{abstract}

We prove that if  the elliptic problem $-\Delta u+b(x)|\nabla u|=c(x)u$ with $c\ge0$ has a positive supersolution in a domain $\Omega$ of $ \IR^{N\ge 3}$, then $c,b$ must satisfy the inequality
\[\sqrt{ \int_\Omega  c\phi^2}\le \sqrt{ \int_\Omega | \nabla\phi|^2}+\sqrt{ \int_\Omega  \frac{b^2}{4}\phi^2},~~~\phi \in C_c^\infty(\Omega).\]
As an application, we obtain Liouville type theorems for positive supersolutions in  exterior domains  when $c(x)-\frac{b^2(x)}{4}>0$ for large $|x|$, but unlike the known results we allow the case $\liminf_{|x|\rightarrow\infty}c(x)-\frac{b^2(x)}{4}=0$. Also the weights $b$ and $c$ are allowed to be unbounded. In particular, among other things, we show that if $\tau:=\limsup_{|x| \rightarrow\infty}|xb(x)|<\infty$ then this problem  does not admit any positive supersolution if
   \[\liminf_{|x| \rightarrow\infty}|x|^2c(x)> \frac{(N-2+\tau)^2}{4},\]
 and, when $\tau=\infty, $ we have the same if
\[\limsup_{R\rightarrow\infty} R\Big(\frac{ \inf_{R<|x|<2 R} (c(x)-\frac{b(x)^2}{4})}{\sup_{\frac{R}{2}<|x|<4 R}|b(x)|}\Big)=\infty.\]

\end{abstract}

\noindent
{\it \footnotesize 2010 Mathematics Subject Classification} Primary 35J60; Secondary 35B53.. {\scriptsize }\\
{\it \footnotesize Key words: Liouville type theorems; supersolutions, gradient term}. {\scriptsize }

\section{Introduction and statement of the main results}

In this work we  consider classical supersolutions of the equation
\begin{equation}\label{new}
-\Delta u+b(x)|\nabla u|=c(x)u,~~x\in\Omega,
\end{equation}
where $\Omega$ is an exterior domain $ \IR^N$, $N\ge3$.
By a classical supersolution we mean a function $u\in C^2(\Omega)$ verifying the inequality $-\Delta u+b(x)|\nabla u|\ge c(x)u$ pointwise in $\Omega$. \\
In \cite{BHN} (also see \cite{BHR}), Berestycki, Hamel and Nadirashvili implicitly
proved, as a consequence of the study of eigenvalue problems in $ \IR^N$, that if $b,c$ are continuous functions in $\IR^N$ then the problem
\begin{equation}\label{new2}
-\Delta u+b(x) \cdot \nabla u\ge c(x)u~~in~~\IR^N\
\end{equation}
does not admit any positive solution provided that $b$ and $c$ are bounded and satisfy
\begin{equation}\label{C}
\liminf_{|x|\rightarrow\infty}c(x)-\frac{b(x)^2}{4}>0.
\end{equation}
L. Rossi in \cite{R} generalized the above nonexistence results to the framework of fully nonlinear elliptic operators. As a particular case, it follows that if $b, c$ are bounded in $ \IR^N\setminus B_{R_0}$ and ($\ref{C}$) holds then Problem $\ref{new2}$ does not admit any positive solution. It's worth noting
that every  supersolution  $u$ of (\ref{new2}) is also a supersolution of (\ref{new}) as we have $b(x) \cdot \nabla u\le |b(x)||\nabla u|$. To see some related problems one can see  \cite {DC} and \cite {CF} where the authors proved some Hadamard and Liouville type properties for
nonnegative viscosity supersolutions of fully non linear uniformly elliptic partial
differential inequalities in the whole space, or in an exterior domain, for more references see \cite{AMQ1,AMQ2,AMQ3,AMQ4,ADJT,BMQ,DC,CF,CPZ,CQZ,FQS}.\\
In \cite{AMQ1} Alarcon,  Garcia-Melian and Quaas  considered positive classical supersolutions of  $(\ref{new})$ for more general unbounded weights $b$ and $c$. They proved that if $b,c\in C( \IR^N\setminus B_{R_0})$ verify $(\ref{C})$ and satisfy a further restriction related
to the fundamental solutions of the homogeneous problem (see Theorems 1.1 and 1.2 in \cite{AMQ1}) then there are no classical positive supersolutions to  $(\ref{new})$ which do (or do not) blow up at infinity. Their proof of nonexistence results depends on properties of the function $m(R) = \inf_{|x|=R} u(x)$ and  fundamental solutions of the equation $−\Delta v + \tilde{b}(|x|)|\nabla v| = 0$ in $ \IR^N \setminus B_{R_0}$, where $\tilde{b}(r):=\sup_{|x|=r}b(x)$.\\

In this paper we use a different approach, by employing  a generalized version of Hardy inequality, and obtain new Liouville type results, that seems to be sharp in some sense, and improve the results mentioned above. In particular,
 we may allow the case
\[\liminf _{|x| \rightarrow\infty}c(x)-\frac{b^2(x)}{4}=0,\]
 and without the boundedness assumption on the weights $b$ and $c$.\\

 We proceed now to the statement of our main results.
 First, using a generalized version of Hardy inequality proved by the second author in \cite{Cowan_hardy}, we prove the following lemma which is crucial in the proofs of the main results.

 \begin{prop}\label{pro-g}
If  (\ref{new}) has a solution $u>0$ then we have
 \begin{equation}\label{ineguality-general}
\sqrt{ \int_\Omega  c\phi^2}\le \sqrt{ \int_\Omega | \nabla\phi|^2}+\sqrt{ \int_\Omega  \frac{b^2}{4}\phi^2},
 \end{equation}
or equivalently,

   \begin{equation}\label{ineguality-general1}
\int_\Omega  (c-\frac{b^2}{4})\phi^2\le \int_\Omega | \nabla\phi|^2+2\sqrt{\int_\Omega | \nabla\phi|^2\int_\Omega  \frac{b^2}{4}\phi^2}
 \end{equation}
  for every $ \phi \in C_c^\infty(\Omega)$.
 \end{prop}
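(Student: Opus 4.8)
The plan is to pass from the differential inequality to a \emph{pointwise} inequality for the logarithmic gradient field $E:=-\nabla u/u=-\nabla(\log u)$, then test it against $\phi^2$ and optimize over a free parameter; this is in essence the same mechanism that underlies the generalized Hardy inequality of \cite{Cowan_hardy}, which one could alternatively invoke directly with this vector field.

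First I would divide the supersolution inequality $-\Delta u+b|\nabla u|\ge cu$ by $u>0$ and use the elementary identity $\operatorname{div} E=-\Delta u/u+|E|^2$ together with $|\nabla u|/u=|E|$ to obtain
\[
\operatorname{div} E-|E|^2+b|E|\ \ge\ c\qquad\text{pointwise in }\Omega .
\]
Here $E\in C^1(\Omega;\IR^N)$ since $u\in C^2(\Omega)$ and $u>0$. Now fix $\phi\in C_c^\infty(\Omega)$, multiply by $\phi^2\ge0$, integrate over $\Omega$, and integrate by parts the divergence term (no boundary contribution, since $\phi^2 E$ has compact support), using $\nabla(\phi^2)=2\phi\nabla\phi$:
\[
\int_\Omega c\phi^2\ \le\ -2\int_\Omega\phi\,\nabla\phi\cdot E-\int_\Omega|E|^2\phi^2+\int_\Omega b|E|\phi^2 .
\]

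Next, for a parameter $\varepsilon\in(0,1)$, Young's inequality gives $-2\phi\,\nabla\phi\cdot E\le\varepsilon|E|^2\phi^2+\varepsilon^{-1}|\nabla\phi|^2$, which absorbs part of the term $-\int_\Omega|E|^2\phi^2$; what remains is controlled pointwise by completing the square, $-(1-\varepsilon)|E|^2+b|E|\le-(1-\varepsilon)|E|^2+|b|\,|E|\le\frac{b^2}{4(1-\varepsilon)}$. Hence, for every $\varepsilon\in(0,1)$,
\[
\int_\Omega c\phi^2\ \le\ \frac1\varepsilon\int_\Omega|\nabla\phi|^2+\frac1{1-\varepsilon}\int_\Omega\frac{b^2}{4}\phi^2 .
\]
Writing $A=\int_\Omega|\nabla\phi|^2$ and $B=\int_\Omega\frac{b^2}{4}\phi^2$, the minimum over $\varepsilon\in(0,1)$ of $A/\varepsilon+B/(1-\varepsilon)$ equals $(\sqrt A+\sqrt B)^2$ (attained at $\varepsilon=\sqrt A/(\sqrt A+\sqrt B)$ when $A,B>0$, and reached in the limit in the degenerate cases), so $\int_\Omega c\phi^2\le(\sqrt A+\sqrt B)^2$; taking square roots yields \eqref{ineguality-general} (the statement being immediate when $\int_\Omega c\phi^2\le0$, e.g.\ under the standing hypothesis $c\ge0$). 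Finally, \eqref{ineguality-general1} follows from \eqref{ineguality-general} by squaring and rearranging, since $\int_\Omega(c-\frac{b^2}{4})\phi^2=\int_\Omega c\phi^2-\int_\Omega\frac{b^2}{4}\phi^2$.

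The only point requiring care is the bookkeeping around $\varepsilon$: it must be taken strictly below $1$ so that a genuinely negative multiple of $\int_\Omega|E|^2\phi^2$ survives Young's inequality and can be spent completing the square against the gradient term, and then one must check that the ensuing single-variable optimization reproduces exactly the sum-of-square-roots on the right of \eqref{ineguality-general}. Everything else is routine once the substitution $E=-\nabla(\log u)$ is in place; the positivity of $u$ enters only to legitimize this substitution and the division by $u$.
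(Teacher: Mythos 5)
Your proof is correct and is in essence the paper's own argument in a different parametrization: the paper substitutes $v=u^{1/t}$ ($t>\frac12$) and applies the Hardy lemma with $T=\frac12$, arriving at $\int_\Omega c\phi^2\le 2t\int_\Omega|\nabla\phi|^2+\frac{t}{2(2t-1)}\int_\Omega b^2\phi^2$, which is exactly your intermediate inequality $\int_\Omega c\phi^2\le \frac1\varepsilon\int_\Omega|\nabla\phi|^2+\frac1{1-\varepsilon}\int_\Omega\frac{b^2}{4}\phi^2$ under $\varepsilon=1/(2t)$. Your direct integration by parts against the field $-\nabla\log u$ simply unwinds the proof of that lemma, and the final one-parameter optimizations coincide, so the argument is sound and essentially identical to the paper's.
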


 The following is our first general  nonexistence result.
 \begin{thm}\label{tg}
 Let $\Omega=\IR^N \setminus B_{R_0}$ be an exterior domian and $b, c \in C( \Omega )$ with $c(x)-\frac{b^2(x)}{4}> 0$ for $|x|$ sufficiently large. Then  (\ref{new}) does not have any positive supersolution if for some   $\gamma>1$ we have

      \begin{equation}\label{g1}
\sup_{R>2R_0} \frac{\inf_{R<|x|<\gamma R}|x|^2
(c-\frac{b^2}{4})}{\sup_{\frac{R}{2}<|x|<2\gamma
R}(1+\frac{2|xb(x)| }{N-2}} > \frac{\beta^2 \ln 2+4\beta+6}{\ln
\gamma}+\beta^2,
 \end{equation}
 where $\beta:=\frac{N-2}{2}$. In particular this the case if
        \begin{equation}\label{g2}
  \frac{\liminf_{|x|\rightarrow\infty}|x|^2 (c-\frac{b^2}{4})}{\limsup_{|x|\rightarrow\infty}(1+\frac{2|xb(x)| }{N-2})} > \frac{(N-2)^2}{4},
 \end{equation}
 \end{thm}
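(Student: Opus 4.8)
The plan is to deduce Theorem~\ref{tg} from Proposition~\ref{pro-g} by choosing a well-adapted family of test functions $\phi$ supported in dyadic-type annuli and extracting a contradiction from the inequality~(\ref{ineguality-general1}). Suppose, for contradiction, that a positive supersolution exists; then (\ref{ineguality-general1}) holds for every $\phi\in C_c^\infty(\Omega)$. The natural candidates are radial functions of the form $\phi(x)=|x|^{-\beta}\psi(|x|)$ with $\beta=\frac{N-2}{2}$, since $|x|^{-\beta}$ is (up to a constant) the extremal for the classical Hardy inequality in $\IR^N$ and makes $\int_\Omega|\nabla\phi|^2$ and $\int_\Omega\frac{(N-2)^2}{4|x|^2}\phi^2$ nearly cancel, leaving a manageable remainder involving only $\psi$. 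Here $\psi$ should be a cutoff that climbs from $0$ to $1$ over the annulus $\frac{R}{2}<|x|<R$, stays at $1$ on $R<|x|<\gamma R$, and descends back to $0$ over $\gamma R<|x|<2\gamma R$; this is exactly the annular structure appearing in (\ref{g1}).

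The key computational steps, carried out in logarithmic variables $t=\ln|x|$ where the problem becomes essentially one-dimensional, are: (i) expand $\int_\Omega|\nabla\phi|^2$ for $\phi=|x|^{-\beta}\psi$, using integration by parts to turn the cross term into $\frac{(N-2)^2}{4}\int_\Omega|x|^{-N}\psi^2\,dx$ plus $\int_\Omega|x|^{-N}(\psi')^2 r^2\,dx$-type terms (in $t$-variables this is $\beta^2\int\psi^2 + \int\dot\psi^2$ over the relevant $t$-interval); (ii) bound $\int_\Omega|\nabla\phi|^2 \le (\beta^2 + \text{(cutoff error)})\int_\Omega\frac{\phi^2}{|x|^2}$, where the cutoff error is controlled by the $\beta^2\ln 2 + 4\beta + 6$ type constant divided by $\ln\gamma$ once $\psi$ is chosen to be, say, linear-in-$t$ on the transition annuli so that $\int\dot\psi^2 \approx \frac{1}{\ln 2}\cdot(\text{length factor})$ while $\int\psi^2 \gtrsim \ln\gamma$ from the plateau; (iii) bound $\int_\Omega\frac{b^2}{4}\phi^2 \le \frac14\sup_{\text{annulus}}(|x|\,|b|)^2 \int_\Omega\frac{\phi^2}{|x|^2}$ on the support, and similarly bound $\int_\Omega(c-\frac{b^2}{4})\phi^2$ from below by $\inf_{R<|x|<\gamma R}(|x|^2(c-\frac{b^2}{4}))\int_{R<|x|<\gamma R}\frac{\phi^2}{|x|^2}$, discarding the transition annuli where $c-\frac{b^2}{4}$ might be negative (legitimate since on the plateau $\phi\equiv|x|^{-\beta}$ and $c-\frac{b^2}{4}>0$ there for $R$ large). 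Feeding these into (\ref{ineguality-general1}) and dividing through by $\int\frac{\phi^2}{|x|^2}$ over the plateau (after comparing the plateau integral with the full-support integral, which costs only an $O(1)$ multiplicative constant absorbed into the stated numerical constants) yields an inequality of the shape $\inf(|x|^2(c-\tfrac{b^2}4)) \le \beta^2 + \frac{C(\beta)}{\ln\gamma} + (\text{$b$-term}) + 2\sqrt{(\cdots)(\text{$b$-term})}$, which rearranges to contradict (\ref{g1}); the passage to (\ref{g2}) then follows by letting $\gamma\to\infty$ and $R\to\infty$ so that the $\frac{1}{\ln\gamma}$ error vanishes and the sup/inf become the $\limsup$/$\liminf$, leaving the clean bound $\frac{(N-2)^2}{4}=\beta^2$.

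The main obstacle I anticipate is the bookkeeping in step (ii): getting the constant $\beta^2\ln 2 + 4\beta + 6$ (rather than a sloppier constant) requires a careful choice of the cutoff profile $\psi$ and an honest accounting of (a) the $\int\dot\psi^2$ contributions from each of the two transition annuli of logarithmic length $\ln 2$, (b) the cross terms between $\psi$ and $\dot\psi$ arising when integrating by parts on the transition annuli (these do not vanish there, unlike on the plateau), and (c) the discrepancy between integrating $\frac{\phi^2}{|x|^2}$ over the plateau only versus over the whole support, which must be controlled uniformly in $R$ and $\gamma$. A secondary subtlety is the appearance of the coupling constant in (\ref{g1}): because the bound on $\int_\Omega\frac{b^2}{4}\phi^2$ uses the sup of $|xb|$ over the \emph{enlarged} annulus $\frac{R}{2}<|x|<2\gamma R$ (the full support of $\phi$) while the lower bound on the $(c-\frac{b^2}{4})$-term uses the inf over the \emph{plateau} $R<|x|<\gamma R$, one must track these two different annuli consistently through the square-root term $2\sqrt{(\cdots)}$ and verify that the $\sqrt{\cdot}$ form of (\ref{ineguality-general}) --- equivalently the arrangement $\sqrt{a}\le\sqrt{b}+\sqrt{d}$ --- is what produces the precise shape $\frac{1+\frac{2|xb|}{N-2}}$ in the denominator of (\ref{g1}) after dividing by $\beta^2$; getting this algebra to match the stated inequality exactly is where most of the care is needed.
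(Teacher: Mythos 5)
Your proposal follows essentially the same route as the paper: the same test function $\phi=|x|^{-\beta}\psi$ supported in $\frac{R}{2}<|x|<2\gamma R$ with plateau $R<|x|<\gamma R$, fed into (\ref{ineguality-general1}), with the $b$-term controlled by $\sup|xb|^2\int\phi^2/|x|^2$, the left side bounded below on the plateau, and the passage $\gamma\to\infty$, $R\to\infty$ for (\ref{g2}). The only organizational difference is that the paper resolves the bookkeeping you flag as the main obstacle by applying the exterior-domain Hardy inequality $\int\phi^2/|x|^2\le\frac{4}{(N-2)^2}\int|\nabla\phi|^2$ over the whole support, which collapses the right-hand side of (\ref{ineguality-general1}) to $\bigl(1+\frac{2\sup|xb|}{N-2}\bigr)\int|\nabla\phi|^2$ and then uses the explicit bound $\int|\nabla\phi|^2\le \beta^2\ln\gamma+2(\beta^2\ln 2+4\beta+6)$ to get the stated constant directly, rather than dividing by the plateau integral as you suggest.
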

 As a consequence we have the following result which is more easy
  to be checked in applications:
  \begin{prop}\label{pro2}
  Let $b, c \in C( \IR^N \setminus B_{R_0} )$ with $c(x)-\frac{b^2(x)}{4}> 0$ for $|x|$ sufficiently large. \\

$(i)$  Assume $\tau:=\limsup_{|x| \rightarrow\infty}|xb(x)|<\infty$. Then  (\ref{new}) does not have any positive supersolution if
       \begin{equation}\label{g3}
\liminf_{|x| \rightarrow\infty}|x|^2c(x)> \frac{(N-2+\tau)^2}{4}.
 \end{equation}
 Moreover, if $b\ge 0$ and
        \begin{equation}\label{g4}
 \limsup_{|x|\rightarrow\infty}|x|^2c(x)< \frac{(N-2+\tau)^2}{4},
 \end{equation}
then (\ref{new}) has a positive supersolution in $\IR^N \setminus B_{R_1}$ for $R_1$ sufficiently large.\\

  $(ii)$ If $\limsup_{|x| \rightarrow\infty}|xb(x)|=\infty$ then (\ref{new}) does not have any positive supersolution if
       \begin{equation}\label{simple1}
\limsup_{R\rightarrow\infty} R\Big(\frac{ \inf_{R<|x|<2 R}
(c-\frac{b^2}{4})}{\sup_{\frac{R}{2}<|x|<4 R}|b(x)|}\Big)=\infty.
 \end{equation}

 \end{prop}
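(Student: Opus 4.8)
The plan is to deduce Proposition~\ref{pro2} from Theorem~\ref{tg} by reading parts (i) and (ii) as asymptotic specializations of its hypotheses; throughout, the standing assumption $c-b^{2}/4>0$ for $|x|$ large that is needed to invoke Theorem~\ref{tg} is already built into the hypotheses of Proposition~\ref{pro2}. For the nonexistence statement in (i) I would check the sufficient condition (\ref{g2}). Since $\tau=\limsup_{|x|\to\infty}|xb(x)|<\infty$, for every $\varepsilon>0$ one has $|x|^{2}b(x)^{2}/4\le(\tau+\varepsilon)^{2}/4$ for $|x|$ large, hence
\[
\liminf_{|x|\to\infty}|x|^{2}\Big(c-\frac{b^{2}}{4}\Big)\ \ge\ \liminf_{|x|\to\infty}|x|^{2}c\ -\ \frac{(\tau+\varepsilon)^{2}}{4},
\]
and letting $\varepsilon\downarrow0$ gives $\liminf_{|x|\to\infty}|x|^{2}(c-b^{2}/4)\ge\liminf_{|x|\to\infty}|x|^{2}c-\tau^{2}/4$; simultaneously $\limsup_{|x|\to\infty}\big(1+\tfrac{2|xb(x)|}{N-2}\big)=\tfrac{N-2+2\tau}{N-2}$. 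Feeding these into the quotient in (\ref{g2}) and using (\ref{g3}) together with the identity $(N-2+\tau)^{2}-\tau^{2}=(N-2)(N-2+2\tau)$, that quotient is bounded below by
\[
\frac{N-2}{N-2+2\tau}\Big(\frac{(N-2+\tau)^{2}}{4}-\frac{\tau^{2}}{4}\Big)=\frac{(N-2)^{2}}{4},
\]
strictly, because (\ref{g3}) is strict; thus (\ref{g2}) holds and Theorem~\ref{tg} rules out a positive supersolution.

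For the sharpness clause in (i) I would test the explicit radial function $u(x)=|x|^{-\alpha}$ with $\alpha=\frac{N-2+\tau}{2}$. Using $b\ge0$, a direct computation gives
\[
-\Delta u+b|\nabla u|=\big(\alpha(N-2-\alpha)+\alpha|xb(x)|\big)\,|x|^{-\alpha-2},
\]
so $u$ is a positive supersolution of (\ref{new}) on $\{|x|>R_{1}\}$ precisely when $\alpha(N-2-\alpha)+\alpha|xb(x)|\ge|x|^{2}c(x)$ there. Since $\alpha(N-2-\alpha)+\alpha\tau=\alpha^{2}=\frac{(N-2+\tau)^{2}}{4}$ for this choice of $\alpha$, the hypothesis $\limsup_{|x|\to\infty}|x|^{2}c(x)<\frac{(N-2+\tau)^{2}}{4}$ leaves precisely the slack required, the gradient term $\alpha|xb(x)|$ supplying the part of the threshold lying above the classical value $\frac{(N-2)^{2}}{4}$. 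The subtle point is exactly this: the gradient term must be retained, since discarding $b|\nabla u|$ only reaches $\frac{(N-2)^{2}}{4}$, so carrying the argument through for a general weight rather than for the model weight $b(x)=\tau/|x|$ is the step I expect to cost the most work.

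For (ii) I would apply Theorem~\ref{tg} with $\gamma=2$, so that the denominator annulus $\{R/2<|x|<2\gamma R\}$ in (\ref{g1}) becomes exactly $\{R/2<|x|<4R\}$, matching (\ref{simple1}). Writing $m_{R}:=\inf_{R<|x|<2R}(c-b^{2}/4)$ and $M_{R}:=\sup_{R/2<|x|<4R}|b|$, and using $|x|^{2}\ge R^{2}$ on the inner annulus together with $|xb(x)|\le4RM_{R}$ on the outer one, the left-hand side of (\ref{g1}) is at least
\[
\sup_{R>2R_{0}}\frac{R^{2}m_{R}}{1+\frac{8RM_{R}}{N-2}}=\sup_{R>2R_{0}}\frac{R\,m_{R}/M_{R}}{\frac{1}{RM_{R}}+\frac{8}{N-2}}.
\]
Because the right-hand side of (\ref{g1}) is a fixed constant depending only on $N$, it suffices to make this supremum exceed it, and the hypothesis $\limsup_{R\to\infty}R\,m_{R}/M_{R}=\infty$ is designed precisely for that. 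The main obstacle I anticipate is the degenerate regime in which the sequence realizing $R\,m_{R}/M_{R}\to\infty$ also has $RM_{R}\to0$, where the displayed quotient need not be large; to handle it one has to return to the test-function argument behind Proposition~\ref{pro-g}, placing $\phi$ on those annuli on which $c-b^{2}/4$ is large relative to $b$, rather than to the packaged form in Theorem~\ref{tg}, or else exploit $\limsup_{|x|\to\infty}|xb(x)|=\infty$ to exclude that regime. Closing this gap cleanly is where I expect the real difficulty to lie.
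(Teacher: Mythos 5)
Your route is the paper's route in all three claims: part (i)'s nonexistence is obtained by verifying (\ref{g2}), with exactly the computation you give (the paper writes it as $\liminf |x|^2(c-\tfrac{b^2}{4})\ge \liminf|x|^2c-\tfrac{\tau^2}{4}$ followed by the identity $\tfrac{\tau^2}{4}+\tfrac{(N-2)^2}{4}\bigl(1+\tfrac{2\tau}{N-2}\bigr)=\tfrac{(N-2+\tau)^2}{4}$); the existence clause is obtained from a power supersolution $|x|^{-m}$; and part (ii) is obtained by taking $\gamma=2$ in Theorem \ref{tg}. The first of these you carry out completely and correctly. The problem is that the other two are precisely the places where you stop and say ``this is where the work is,'' so as a proof the proposal is incomplete in two of its three claims.

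For the existence clause of (i), the paper does not fix $m=\tfrac{N-2+\tau}{2}$: it chooses $\alpha_1>\limsup|x|^2c$ and $\tau_1<\tau$ with $\alpha_1\le\tfrac{(N-2+\tau_1)^2}{4}$, bounds $m(N-2-m)+mb|x|-c|x|^2\ge -m^2+(N-2+\tau_1)m-\alpha_1$ for $|x|$ large, and then picks $m>0$ making this quadratic nonnegative, which is possible exactly because of the discriminant condition $\alpha_1\le\tfrac{(N-2+\tau_1)^2}{4}$; the slack you were missing comes from the freedom in $m$, not from the borderline exponent. Note, however, that this step uses $b(x)|x|\ge\tau_1$ for \emph{all} large $|x|$, i.e.\ a lower bound on $\liminf|x|b(x)$, which does not follow from $\limsup|x|b(x)=\tau$ alone --- so the difficulty you flag (``general weight versus the model weight $\tau/|x|$'') is real, and the paper's own line here deserves scrutiny rather than being a routine detail you omitted. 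For part (ii) the paper's entire proof is the assertion that (\ref{simple1}) implies (\ref{g1}) with $\gamma=2$. Your estimate shows this is immediate when $RM_R$ stays bounded below along the sequence realizing (\ref{simple1}) (the quotient is then bounded below by a constant multiple of $Rm_R/M_R$), but the degenerate regime $RM_R\to0$ that you single out is not excluded by $\limsup_{|x|\to\infty}|xb(x)|=\infty$, since that limsup can be attained on annuli disjoint from those realizing (\ref{simple1}), and the paper does not address it. So the two gaps you leave open are genuine gaps in your proposal; be aware that the paper closes the first with the $(\alpha_1,\tau_1,m)$ device just described (itself resting on a questionable lower bound for $|x|b(x)$) and does not close the second at all.
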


 \begin{exam}

 Consider the problem
 \begin{equation} \label{exam1}
-\Delta u + b|x|^\lambda | \nabla u| \ge c|x|^\mu u \quad \IR^N\setminus B_{R_0},
\end{equation}
 where $b,c\in R$  and $\mu\ge 2\lambda$. Then it is easy to see that we have
 \[J(R):=R\Big(\frac{ \inf_{R<|x|<2 R} (c(x)-\frac{b(x)^2}{4})}{\sup_{R<|x|<2 R}|b(x)|}\Big)\ge C_0 R^{\mu-\lambda},~~when~~\mu>2\lambda,\]
 and also
 \[J(R)=C_1(c-\frac{b^2}{4})R^\lambda,~~when~~\mu=2\lambda.\]
 Then by Corollary 2 we see that (\ref{exam1})  does not admit any positive supersolution when $\mu>2\lambda$ and $c>0$, or $\mu=2\lambda$ and $c-\frac{b^2}{4}>0$. Also, in the remaining cases, its not hard to see that a positive supersolution can always be constructed for suitably large $R_0$ (see \cite{AMQ1} Corollary 1.2 ).
\end{exam}

 \begin{exam}
Consider the problem
\begin{equation} \label{exam2}
-\Delta u + |x|^\lambda | \nabla u| \ge (\frac{|x|^{2\lambda}}{4}+\frac{1}{|x|^\mu}) u, \quad  x\in\IR^N\setminus B_1,
\end{equation}
where $\lambda>-1$ and $\lambda+\mu<1$. Note here we have $c(x)-\frac{b(x)^2}{4}=\frac{1}{|x|^\mu}$,
hence, if $\mu>0$
\[\liminf_{|x|\rightarrow\infty} c-\frac{b^2}{4}=0,\]
thus none of the previous results can apply. However we have, for a fixed $\gamma>1$,
\[  \frac{\inf_{R<|x|<\gamma R}|x|^2 (c-\frac{b^2}{4})}{1+\frac{2\sup_{R<|x|<\gamma R}|xb(x)| }{N-2}} =  \frac{\inf_{R<|x|<\gamma R}|x|^{2-\mu}}{1+\frac{2\sup_{R<|x|<\gamma R}|x|^{1+\lambda} }{N-2}}  =O(R^{1-\mu-\lambda})\rightarrow\infty~~as~~R\rightarrow\infty. \]
Hence by the above result (\ref{exam2})  does not admit any positive supersolution.

\end{exam}

\begin{remark}
Note in the special case $b(x)\equiv0$, from Proposition \ref{pro2}  (with $\tau=0$ in part (i)) we see that the equation
\begin{equation}\label{b=0}
-\Delta u\ge c(x)u,
\end{equation}
in an exterior domain $\Omega$, does not admit any positive supersolution if

 \begin{equation}\label{lem-g3}
\liminf_{|x|\rightarrow\infty} |x|^2c(x)> \frac{(N-2)^2}{4}.
 \end{equation}

 It is worth noting that in \cite{DFP,P1,P2} it is pointed out that the above nonexistence result for positive supersolution to $(\ref{b=0})$ can be obtained by using Agmon-Allegretto-Piepenbrink theory \cite{Agmon}. The above is also proved  in \cite{CPZ} by a different method, where the authors for the proof employed the Kelvin transform to transfer the unbounded domain $\Omega$ into a bounded one containing the origin and then applied a result of \cite{CQZ} regarding the nonexistence of positive solutions for the problem $-\Delta u=\frac{\mu} {|x|^2}u+f,$ with Dirichlet BC. in a a bounded smooth domain $D$ containing the origin, where $0\le f\in L^\infty_{loc}(D\setminus \{0\})$ and $\mu>\frac{(N-2)^2}{4}$.\\
 However, note that our Proposition \ref{pro2} proves more, indeed by part (i) we see that (\ref{lem-g3})  is indeed sufficient for the nonexistence of positive supersolutions for the more general equation
 \begin{equation}\label{xb=0}
-\Delta u+\frac{\E(x)}{|x|}|\nabla u|\ge c(x)u,
\end{equation}
in exterior domains when $\lim_{x\rightarrow\infty}|\E(x)|=0$.
\end{remark}

As a byproduct of the above results, we can prove the following useful general estimate using a Hardy type inequality.

 \begin{cor}\label{byproduct}
 Let $E$ be a positive smooth function in an exterior domain $\Omega$ in $ \IR^N$ ($N\ge 3$) with $-\Delta E\ge0$. Then
 \begin{equation}\label{lem-g4}
\liminf_{|x|\rightarrow\infty} |x|^2\frac{-\Delta E}{E}\le  \frac{(N-2)^2}{4}.
 \end{equation}
 \end{cor}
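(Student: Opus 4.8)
The plan is to argue by contradiction and reduce the estimate to the Liouville theorem for $-\Delta u \ge c(x)u$ already recorded in the Remark following Proposition \ref{pro2}. Suppose the conclusion \eqref{lem-g4} fails. Then there are $\delta>0$ and $R_1>0$ (with $\{|x|>R_1\}\subseteq\Omega$, which is possible since $\Omega$ is an exterior domain) such that $|x|^2\,\frac{-\Delta E}{E}>\frac{(N-2)^2}{4}+\delta$ for every $x$ with $|x|>R_1$.

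Set $c(x):=\frac{-\Delta E}{E}$ for $|x|>R_1$. Since $E$ is smooth and positive and $-\Delta E\ge 0$, the function $c$ is continuous and nonnegative, and by construction $E$ solves $-\Delta E=c(x)E$ in $\{|x|>R_1\}$; in particular $E$ is a positive classical supersolution of \eqref{b=0}, i.e.\ of \eqref{new} with $b\equiv 0$, in this exterior domain. On the other hand, the pointwise bound gives $\liminf_{|x|\to\infty}|x|^2c(x)\ge\frac{(N-2)^2}{4}+\delta>\frac{(N-2)^2}{4}$, and $c(x)-\frac{b^2(x)}{4}=c(x)>0$ for $|x|$ large (here $N\ge 3$ is used, so that $\frac{(N-2)^2}{4}\ge\tfrac14>0$). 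Hence Proposition \ref{pro2}(i) applied with $b\equiv 0$ and $\tau=\limsup_{|x|\to\infty}|xb(x)|=0$ — equivalently the nonexistence statement \eqref{lem-g3} of the Remark — shows that \eqref{b=0} has no positive supersolution in $\{|x|>R_1\}$, contradicting the existence of $E$. Therefore \eqref{lem-g4} must hold.

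I expect no genuine obstacle: all the analytic work sits in Proposition \ref{pro-g}/Proposition \ref{pro2}, and the corollary is merely the remark that a positive superharmonic function $E$ is automatically a positive solution, hence supersolution, of $-\Delta u=\bigl(\tfrac{-\Delta E}{E}\bigr)u$. The only points deserving a word of care are passing to a possibly smaller exterior domain $\{|x|>R_1\}$ on which the pointwise bound holds, and recording that $c$ is continuous and nonnegative so that the hypotheses of Proposition \ref{pro2} are met. Alternatively, one may bypass Proposition \ref{pro2} and invoke Proposition \ref{pro-g} directly with $b\equiv 0$: it yields $\int_\Omega c\phi^2\le\int_\Omega|\nabla\phi|^2$ for all $\phi\in C_c^\infty(\{|x|>R_1\})$, which together with the pointwise bound forces $\bigl(\tfrac{(N-2)^2}{4}+\delta\bigr)\int\frac{\phi^2}{|x|^2}\le\int|\nabla\phi|^2$ on $C_c^\infty(\{|x|>R_1\})$, contradicting the (well-known) non-improvability of the Hardy constant $\tfrac{(N-2)^2}{4}$ near infinity.
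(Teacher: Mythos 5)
Your proof is correct and is essentially the argument the paper gives: both rest on the observation that $c:=\frac{-\Delta E}{E}$ makes $E$ a positive (super)solution of $-\Delta u= c(x)u$ with $b\equiv 0$, so the $b\equiv 0$ Liouville machinery forces $\liminf_{|x|\to\infty}|x|^2c(x)\le\frac{(N-2)^2}{4}$. The only cosmetic difference is the entry point: the paper obtains the key inequality $\int_\Omega c\phi^2\le\int_\Omega|\nabla\phi|^2$ directly from Lemma 1 with $T=1$ and then quotes the conclusion of Theorem \ref{tg}, whereas you route through Proposition \ref{pro2}(i) with $\tau=0$ (checking its hypothesis $c>0$ for large $|x|$ via the contradiction setup), which ultimately relies on the same chain.
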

\begin{exam} As an application of the above corollary consider  the equation
 \begin{equation}\label{lem-g5}
-\Delta u= |x|^au^p,~~in~~\Omega,
 \end{equation}
where  $a\in R$, $p>1$ and $\Omega$ is an exterior domain in $ \IR^N$ ($N\ge 3$). Now if $u$ is a positive classical supersolutions of this equation then we get, by Corollary \ref{byproduct},
\[\liminf_{|x|\rightarrow\infty}|x|^2 \frac{-\Delta u}{u}=\liminf_{|x|\rightarrow\infty} |x|^{a+2}u^{p-1}\le  \frac{(N-2)^2}{4}.\]
However, we know that a superharmonic function $u$ satisfies $u(x)\ge C |x|^{2-N}$ in $\Omega$ (see \cite{Serrin} or \cite {AS1,CM}), hence we must have $a+2+(p-1)(2-N)\le0$. Thus the above equation does not admit any positive supersolution if $p<\frac{N+a}{N-2}$,
which is a known result. Also, by a similar argument from Corollary  \ref{byproduct} we see that the equation
\[-\Delta u=\frac{\mu u}{|x|^2},~~(\mu>0)\]
does not admit any positive supersolution in an exterior domain if $\mu> \frac{(N-2)^2}{4}$.
\end{exam}

 \begin{cor}\label{cor3}
 If  (\ref{new}) has a solution $u>0$ , and there exists a smooth function $E>0$ with $-\Delta E\ge0 $ such that

   \begin{equation}\label{c1}
   b^2\le \gamma^2 \frac{|\nabla E|^2}{E^2}
    \end{equation}
    then

   \begin{equation}\label{c2}
\int_\Omega  (c-\frac{b^2}{4})\phi^2\le (1+2\gamma)\int_\Omega | \nabla\phi|^2,~~~\phi \in C_c^\infty(\Omega).
 \end{equation}
As a consequence (\ref{new}) does not have any supersolution if
      \begin{equation}\label{E}
\liminf_{|x|\rightarrow\infty} |x|^2(c-\frac{b^2}{4})>(1+2\gamma)
\frac{(N-2)^2}{4}.
 \end{equation}
 In particular, taking $E(x)=|x|^{2-N}$ we see that if $\tau:=\limsup_{|x|\rightarrow\infty}|x|b(x)<\infty$
 then (\ref{new}) does not have any supersolution if
 \[\liminf_{|x|\rightarrow\infty} |x|^2c(x)> \frac{(N-2+\tau)^2}{4}.\]
\end{cor}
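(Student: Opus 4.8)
The plan is to combine Proposition~\ref{pro-g} with a Hardy-type bound for $\int_\Omega \frac{|\nabla E|^2}{E^2}\phi^2$ that follows from $-\Delta E\ge 0$. Concretely, I would first prove the auxiliary estimate: for every positive $E\in C^2$ with $-\Delta E\ge 0$,
\[\int_\Omega \frac{|\nabla E|^2}{E^2}\phi^2\le 4\int_\Omega |\nabla\phi|^2,\qquad \phi\in C_c^\infty(\Omega).\]
The device is the ground-state substitution applied to $w:=\sqrt E>0$: writing $\phi=w\psi$ and integrating by parts yields the identity $\int_\Omega |\nabla\phi|^2=\int_\Omega \frac{-\Delta w}{w}\phi^2+\int_\Omega w^2|\nabla\psi|^2$, hence $\int_\Omega |\nabla\phi|^2\ge \int_\Omega \frac{-\Delta w}{w}\phi^2$ whenever $w$ is superharmonic. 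Since $-\Delta w=\tfrac12 E^{-1/2}(-\Delta E)+\tfrac14 E^{-3/2}|\nabla E|^2$, the hypothesis $-\Delta E\ge 0$ makes $w$ superharmonic and forces $\frac{-\Delta w}{w}\ge \tfrac14\frac{|\nabla E|^2}{E^2}$, which is the claim. (This is essentially the Hardy-type inequality already underlying Corollary~\ref{byproduct}.)

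Next, assuming (\ref{new}) has a positive supersolution, Proposition~\ref{pro-g} gives inequality (\ref{ineguality-general1}). By the structural hypothesis (\ref{c1}) and the auxiliary estimate, $\int_\Omega \frac{b^2}{4}\phi^2\le \frac{\gamma^2}{4}\int_\Omega \frac{|\nabla E|^2}{E^2}\phi^2\le \gamma^2\int_\Omega |\nabla\phi|^2$. Substituting this into the right-hand side of (\ref{ineguality-general1}) produces $\int_\Omega (c-\frac{b^2}{4})\phi^2\le \int_\Omega |\nabla\phi|^2+2\sqrt{\gamma^2}\,\int_\Omega |\nabla\phi|^2=(1+2\gamma)\int_\Omega |\nabla\phi|^2$, which is exactly (\ref{c2}); note the algebra is lossless, which is why the clean constant $1+2\gamma$ appears.

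For the consequence, suppose (\ref{E}) holds. Pick $\lambda>(1+2\gamma)\frac{(N-2)^2}{4}$ and $R_1$ so large that $\{|x|>R_1\}\subseteq\Omega$ and $c-\frac{b^2}{4}\ge \lambda|x|^{-2}$ there. Testing (\ref{c2}) with $\phi\in C_c^\infty(\IR^N\setminus\overline{B_{R_1}})$ gives $\lambda\int \frac{\phi^2}{|x|^2}\le (1+2\gamma)\int |\nabla\phi|^2$, contradicting the optimality of the constant $\frac{(N-2)^2}{4}$ in the Hardy inequality $\int |\nabla\phi|^2\ge \frac{(N-2)^2}{4}\int \frac{\phi^2}{|x|^2}$ on the exterior domain (the constant is sharp and unattained there, so near-optimizers supported near infinity violate the displayed bound for our $\lambda$). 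For the final "in particular'' statement take $E(x)=|x|^{2-N}$, so $-\Delta E=0$ and $\frac{|\nabla E|^2}{E^2}=\frac{(N-2)^2}{|x|^2}$; given $\liminf_{|x|\to\infty}|x|^2c>\frac{(N-2+\tau)^2}{4}$, choose $\gamma$ with $\gamma(N-2)>\tau$ yet still $\liminf|x|^2c>\frac{(1+\gamma)^2(N-2)^2}{4}$ — possible because the right-hand threshold tends to $\frac{(N-2+\tau)^2}{4}$ as $\gamma(N-2)\downarrow\tau$. Then $|x||b(x)|\le \gamma(N-2)$ for large $|x|$, so (\ref{c1}) holds on a suitable exterior subdomain (on which a positive supersolution persists by restriction), and since $\frac{b^2}{4}\le \frac{\gamma^2(N-2)^2}{4|x|^2}$ we obtain $\liminf|x|^2(c-\frac{b^2}{4})\ge \liminf|x|^2c-\frac{\gamma^2(N-2)^2}{4}>(1+2\gamma)\frac{(N-2)^2}{4}$, so the previous case applies.

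The only substantive point is the $w=\sqrt E$ trick in the first step, which converts "$E$ superharmonic'' into genuine control of $\int \frac{|\nabla E|^2}{E^2}\phi^2$; after that, the argument is bookkeeping with Proposition~\ref{pro-g} together with the standard sharpness argument for the exterior-domain Hardy inequality. I would expect the minor technical care to be in justifying that the Hardy constant $\frac{(N-2)^2}{4}$ remains optimal on $\IR^N\setminus\overline{B_{R_1}}$ with near-optimizers concentrated at infinity, but this is classical.
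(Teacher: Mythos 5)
Your proof is correct and follows essentially the same route as the paper: the bound $\int_\Omega \frac{|\nabla E|^2}{4E^2}\phi^2\le\int_\Omega|\nabla\phi|^2$ you derive via the $w=\sqrt{E}$ substitution is exactly the paper's Lemma with $T=\tfrac12$, and combining it with (\ref{c1}) and Proposition \ref{pro-g} to get the constant $1+2\gamma$ is the paper's argument verbatim. For the consequence (\ref{E}) the paper re-runs the explicit annular test functions of Theorem \ref{tg} rather than citing sharpness of the exterior-domain Hardy constant, but these are the same computation, and your handling of the $E=|x|^{2-N}$ case (choosing $\gamma$ with $\gamma(N-2)\downarrow\tau$) correctly fills in a detail the paper leaves implicit.
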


\section{Proofs of the main results}

For the proof of our main results we use the following Hardy type inequality which is a special case of a result from \cite{Cowan_hardy} . For the sake of completeness we give a proof.
\begin{lemma}
Let $ E>0$ be smooth.  Then for all $ T \in \IR$ we have
\begin{equation}\label{main1}
 \int_\Omega | \nabla \phi|^2 dx \ge (T-T^2) \int \frac{| \nabla E|^2}{E^2} \phi^2 + T \int \frac{-\Delta E}{E} \phi^2 \qquad \phi \in C_c^\infty(\Omega).
 \end{equation}
 In particular, taking $T=\frac{1}{2}$ we get
 \begin{equation}\label{main 2}
 2\int_\Omega | \nabla \phi|^2 dx \ge \frac{1}{2} \int \frac{| \nabla E|^2}{E^2} \phi^2 + \int \frac{-\Delta E}{E} \phi^2 \qquad \phi \in C_c^\infty(\Omega).
 \end{equation}
\end{lemma}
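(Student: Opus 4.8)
The plan is to derive \eqref{main1} by testing the elementary inequality $|\nabla(\phi/E^T) + \text{(correction)}|^2 \ge 0$, or equivalently by expanding a suitable integration by parts involving the power $E^{-2T}$; let me be more precise. First I would substitute $\phi = E^T \psi$ with $\psi \in C_c^\infty(\Omega)$ (since $E>0$ is smooth, this is a bijection on test functions up to the mild caveat that $\psi$ need only be Lipschitz with compact support, which suffices by density) and compute $\nabla \phi = T E^{T-1}\psi \nabla E + E^T \nabla \psi$. Squaring gives
\[
|\nabla\phi|^2 = T^2 E^{2T-2}\psi^2 |\nabla E|^2 + 2T E^{2T-1}\psi \nabla E\cdot\nabla\psi + E^{2T}|\nabla\psi|^2.
\]
The middle term is $T E^{2T-1}\nabla E \cdot \nabla(\psi^2) = T \nabla E \cdot \nabla(\psi^2) E^{2T-1}$, and I would integrate it by parts. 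The key computation is that $\int \nabla E \cdot \nabla(\psi^2 E^{2T-1}) = \int (-\Delta E)\psi^2 E^{2T-1}$; distributing the gradient on the right-hand side of the product rule produces exactly the term $(2T-1)\int E^{2T-2}\psi^2 |\nabla E|^2$ plus $\int E^{2T-1}\nabla E \cdot \nabla(\psi^2)$. Solving this linear relation for $\int E^{2T-1}\nabla E\cdot\nabla(\psi^2)$ yields
\[
\int T E^{2T-1}\nabla E\cdot\nabla(\psi^2) = T\int(-\Delta E)\psi^2 E^{2T-1} - T(2T-1)\int E^{2T-2}\psi^2|\nabla E|^2.
\]

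Substituting back, and writing everything in terms of $\phi = E^T\psi$ (so $E^{2T-2}\psi^2 = \phi^2/E^2$, $E^{2T-1}\psi^2 = \phi^2/E$, and $E^{2T}|\nabla\psi|^2 \ge 0$), I obtain
\[
\int_\Omega |\nabla\phi|^2 = \int E^{2T}|\nabla\psi|^2 + \bigl(T^2 - T(2T-1)\bigr)\int \frac{|\nabla E|^2}{E^2}\phi^2 + T\int\frac{-\Delta E}{E}\phi^2,
\]
and since $T^2 - T(2T-1) = T - T^2$ and the discarded term $\int E^{2T}|\nabla\psi|^2$ is nonnegative, this gives precisely \eqref{main1}. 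Taking $T = \tfrac12$ makes $T - T^2 = \tfrac14$ and multiplying by $2$ gives \eqref{main 2}.

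The only genuine obstacle is the integration-by-parts step: one must justify that $\psi^2 E^{2T-1}\nabla E$ is an admissible vector field, which is immediate here because $\psi$ has compact support in $\Omega$ and $E$ is smooth and strictly positive, so there are no boundary terms and no integrability issues. A secondary, purely bookkeeping point is to confirm the algebraic identity $T^2 - T(2T-1) = T - T^2$, which is trivial. Everything else is routine manipulation, so I would present the computation cleanly in the order above: substitution, product-rule expansion of $|\nabla\phi|^2$, integration by parts on the cross term, back-substitution, and finally dropping the nonnegative term.
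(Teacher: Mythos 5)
Your proof is correct and is essentially the paper's own argument: the paper sets $v=E^{-T}\phi$ (your $\psi$) and expands $E^{2T}|\nabla v|^2\ge 0$, which is the same identity you obtain by writing $\phi=E^{T}\psi$ and expanding $|\nabla\phi|^2$, followed by the same integration by parts on the cross term with all derivatives placed on $E$. The only cosmetic difference is which side of the identity carries the discarded nonnegative term $\int E^{2T}|\nabla\psi|^2$.
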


\begin{proof}

Fix $ \phi \in C_c^\infty(\Omega)$ and set $v:=E^{-T} \phi$.  Then computing $ E^{2T} | \nabla v|^2$ gives
\[ E^{2T} | \nabla v|^2 = | \nabla \phi|^2 -2 T \nabla \phi \cdot \nabla E \phi  E^{-1} + \frac{T^2 | \nabla E|^2 \phi^2}{E^2},\] and now integrate this and note this term on the left is nonnegative.  Now integrating the middle term by parts (put all derivatives on $E$)  then gives the desired result.
\end{proof}

\noindent
\textbf{Proof of proposition \ref{pro-g}.} For $ t>\frac{1}{2}$  set $v:= u^\frac{1}{t}$.   Then we have
\[ \frac{-\Delta v}{v} \ge \frac{c}{t} + (t-1) \frac{| \nabla v|^2}{v^2} - b \frac{| \nabla v|}{v} \quad \mbox{ in } \Omega, \]
or,

\begin{eqnarray*}
\frac{-\Delta v}{v}+\frac{1}{2}\frac{| \nabla v|^2}{v^2}  &\ge & \frac{c}{t} + (t-\frac{1}{2}) \frac{| \nabla v|^2}{v^2} - b \frac{| \nabla v|}{v} \\
&  \ge &  \frac{c}{t}-\frac{b^2}{2(2t-1)} \quad \mbox{ in } \Omega.
\end{eqnarray*}

Recall the inequality  (\ref{main 2}),
\[
 \int_\Omega \frac{-\Delta v}{v} \phi^2 dx + \frac{1}{2} \int_\Omega \frac{ | \nabla v|^2}{v^2} \phi^2 dx \le 2 \int_\Omega | \nabla \phi|^2 dx
 \]
 for all $ \phi \in C_c^\infty(\Omega)$, then we get from above

\[\int_\Omega  c  \phi^2 dx \le 2 t \int_\Omega | \nabla \phi|^2 dx + \frac{t}{2(2t-1)} \int_\Omega b^2 \phi^2 dx.\]

Now we set
\[t=\frac{1}{2}+\frac{\sqrt{\int_\Omega b^2 \phi^2 dx}}{4\sqrt{\int_\Omega | \nabla \phi|^2 dx }}\]
to get
\[\int_\Omega  c  \phi^2 dx \le\Big(\sqrt{\int_\Omega | \nabla \phi|^2 dx }+\sqrt{\int_\Omega \frac{b^2}{4} \phi^2 dx}\Big)^2\]
or
\[\sqrt{ \int_\Omega  c\phi^2}\le \sqrt{ \int_\Omega | \nabla\phi|^2}+\sqrt{ \int_\Omega  \frac{b^2}{4}\phi^2},\]
that proves $(\ref{ineguality-general})$. Squaring both sides of  $(\ref{ineguality-general})$ gives $(\ref{ineguality-general1})$. \qed\\

\noindent
\textbf{Proof of Theorem $\ref{tg}$.} Assume (\ref{new}) has a positive supersolution $u$. Then from Proposition \ref{pro-g}, $c,b$ must satisfy inequality (\ref{ineguality-general1}).
Let $\gamma>1$, $R>2R_0 $ and take a smooth function $\psi$ in $\Omega$ with
$ \psi=0$ for $ R_0<|x|<\frac{R}{2}$ or $ |x|>2\gamma R$, $ \psi=1$ in $ R <|x|<\gamma R$, $0\le \psi\le 1$  and $\nabla \psi\le \frac{4}{R}$. Now  we consider  $\phi:=|x|^{-\beta}\psi$ as a test function in (\ref{ineguality-general1}), where $\beta:=\frac{N-2}{2}$. We have
\[\nabla \phi=-\beta |x|^{-\beta-2}\psi x+|x|^{-\beta}\nabla \psi\]
gives
\[|\nabla \phi|^2=\beta^2 |x|^{-2\beta-2}\psi^2-2\beta |x|^{-2\beta-2}\psi x.\nabla\psi+|x|^{-2\beta}|\nabla\psi|^2,\]
and then by the assumptions on $\psi$ and $\nabla\psi$ we have the estimates
\[|\nabla \phi|^2\le \beta^2 |x|^{-N}+\frac{8\beta}{R} |x|^{1-N}+ \frac{16}{R^2}|x|^{2-N},~~\frac{R}{2}<|x|< R, ~or~\gamma R<|x|<2\gamma R\]
and
\[|\nabla \phi|^2=\beta^2 |x|^{-N},~~R<|x|<\gamma R.\]
 Now we write

\[ \int_\Omega |\nabla \phi|^2 = \int_{\frac{R}{2}<|x|<2\gamma R} |\nabla \phi|^2=\int_{\frac{R}{2}<|x|<R} |\nabla \phi|^2+\int_{R<|x|<\gamma R} |\nabla \phi|^2+\int_{\gamma R<|x|<2\gamma R} |\nabla \phi|^2\]
\[:=I_1(R)+I_2(R)+I_3(R).\]
Using the fact that if $\alpha+N\not=0$ we have
\[\int_{R<|x|<T} |x|^\alpha dx=K_N\int_R^Tr^{\alpha+N-1}dr=K_N \frac{T^{\alpha+N}-R^{\alpha+N}}{\alpha+N}\]
and if $\alpha+N=0$ we have
\[\int_{R<|x|<T} |x|^\alpha dx=K_N\int_R^Tr^{\alpha+N-1}dr=K_N \ln\frac{T}{R}\]
(we set $K_N=1$ as it appears the same in both sides of the inequality) then
we compute
\[I_1(R)\le \beta^2 \ln 2+4\beta+6 :=C_{N},~~I_2(R)= \beta^2 \ln \gamma\]
and since  $I_3(R)=I_1(2\gamma R)$ we also get
\[I_3(R)\le C_{N}.\]\\
Hence, we proved that
\begin{equation}\label{grad}
 \int_\Omega |\nabla \phi|^2\le 2C_N+\beta^2 \ln \gamma.
\end{equation}
Recall from $(\ref{ineguality-general1})$ we have
\[
\int_\Omega  (c-\frac{b^2}{4})\phi^2\le \int_\Omega | \nabla\phi|^2+2\sqrt{\int_\Omega | \nabla\phi|^2\int_\Omega  \frac{b^2}{4}\phi^2}.\]
We write
\[\int_\Omega  \frac{b^2}{4}\phi^2=\int_{\frac{R}{2}<|x|<2\gamma R}  \frac{b^2}{4}\phi^2\le \frac{1}{4} \sup_{\frac{R}{2}<|x|<2\gamma R}|xb(x)|^2\int_{\frac{R}{2}<|x|<2\gamma R} \frac{\phi^2}{|x|^2},\]
and  estimate the last integral by the Hardy inequality in exterior domains (see \cite{Cowan_hardy}) as
\[\int_{\frac{R}{2}<|x|<2\gamma R} \frac{\phi^2}{|x|^2}\le (\frac{2}{N-2})^2 \int_{\Omega} |\nabla \phi|^2, \]
hence,
\[
\int_\Omega  (c-\frac{b^2}{4})\phi^2\le \Big(1+\frac{2\sup_{\frac{R}{2}<|x|<2\gamma R}|xb(x)| }{N-2}\Big)\int_\Omega | \nabla\phi|^2.\]
The above inequality together   inequality $(\ref{grad})$ give
\[
\int_\Omega  (c-\frac{b^2}{4})\phi^2 \le \Big(1+\frac{2\sup_{\frac{R}{2}<|x|<2\gamma R}|xb(x)| }{N-2}\Big) (\beta^2 \ln 2+4\beta+6+\beta^2\ln\gamma),\]
and then using the estimate
\[
\int_\Omega  (c-\frac{b^2}{4})\phi^2\ge \int_{R<|x|<\gamma R}  (c-\frac{b^2}{4})|x|^{2\beta}=\int_{R<|x|<\gamma R} |x|^{2\beta-2}|x|^2 (c-\frac{b^2}{4})\ge \inf_{R<|x|<\gamma R}|x|^2 (c-\frac{b^2}{4}) \ln\gamma,\]
we arrive at
  \begin{equation}\label{ineguality-general2}
  \frac{\inf_{R<|x|<\gamma R}|x|^2 (c-\frac{b^2}{4})}{1+\frac{2\sup_{\frac{R}{2}<|x|<2\gamma R}|xb(x)| }{N-2}} \le \frac{\beta^2 \ln 2+4\beta+6}{\ln \gamma}+\beta^2.
  \end{equation}
Hence, there exists no positive supersolution if inequality
(\ref{ineguality-general2}) violates for some $R>2R_0$ and
$\gamma>1$, that proves the first part.
Also, letting
$\gamma\rightarrow\infty$ and then $R\rightarrow\infty$ in
 (\ref{ineguality-general2}) we  see that  (\ref{new}) does
not have any positive supersolution if $(\ref{g2})$  holds.
\qed \\

\noindent
\textbf{Proof of Proposition \ref{pro2}. } To prove (i) note that from Theorem $\ref{tg}$ we have the desired result if
\[ \frac{\liminf_{|x|\rightarrow\infty}|x|^2 (c-\frac{b^2}{4})}{\limsup_{|x|\rightarrow\infty}(1+\frac{2|xb(x)| }{N-2})} > \frac{(N-2)^2}{4}.\]
Now using the fact that
\[\liminf_{|x|\rightarrow\infty}
|x|^2(c-\frac{b^2}{4})\ge \liminf_{|x|\rightarrow\infty}
|x|^2c-\frac{\tau^2}{4},\]
we see that  the  inequality above holds if
\[\liminf_{|x|\rightarrow\infty}|x|^2 c> \frac{\tau^2}{4}+\frac{(N-2)^2}{4}(1+\frac{2\tau}{N-2})=\frac{(N-2+\tau)^2}{4}.\]
To prove the second part of (i),  set
$\alpha:=\limsup_{|x|\rightarrow\infty}
|x|^2c$, then $(\ref{g4})$ reads as
$\alpha< \frac{(N-2+\tau)^2}{4}.$
Now choose $\alpha_1>\alpha$ and $\tau_1<\tau$ such that
\begin{equation}\label{sup-1}
\alpha_1\le \frac{(N-2+\tau_1)^2}{4}.
\end{equation}
Now we look for some $m>0$ so that the function $u(x)=|x|^{-m}$ is a supersolution of equation $(\ref{new})$ in $\IR^N \setminus B_{R_1}$ for $R_1$ sufficiently large. We need
\[-\Delta u+b(x)|\nabla u|-cu=|x|^{-m-2}\Big[m(N-2-m)+mb|x|-c|x|^2\Big]\ge0,\]
for $|x|$ sufficiently large. We have
\[m(N-2-m)+mb|x|-c|x|^2\ge m(N-2-m)+m\tau_1-\alpha_1=-m^2+(N-2+\tau_1)m-\alpha_1.\]
for $|x|$ sufficiently large, and note the last term is nonnegative for some $m>0$ if and only if $(\ref{sup-1})$ holds.\\
To prove part (ii), note that if inequality $(\ref{simple1})$  holds then inequality $(\ref{g1})$ holds with $\gamma=2$ and the results follows by Theorem $\ref{tg}$.\qed \\

\noindent
 \textbf{Proof of Corollary $\ref{byproduct}$.}  First note that from Lemma 1, if we take $T=1$ in $(\ref{main1})$ we then get
 \begin{equation}\label{main11}
 \int_\Omega | \nabla \phi|^2 dx \ge  \int_\Omega \frac{-\Delta E}{E} \phi^2 \qquad \phi \in C_c^\infty(\Omega).
 \end{equation}
Taking $c(x):=\frac{-\Delta E}{E}$ then from  (\ref{main11}) we see that this $c$ satisfies (\ref{ineguality-general}) with $b\equiv0$, hence from (\ref{tg}) we must have
\[\liminf_{|x|\rightarrow\infty} |x|^2c(x)\le  \frac{(N-2)^2}{4},\]
that proves  (\ref{lem-g3}). \qed \\

\noindent
\textbf{Proof of Corollary $\ref{cor3}$. }  By the assumption (\ref{c1}) we have
\[\int_\Omega \frac{b^2}{4}\phi^2\le \gamma^2 \int_\Omega \frac{|\nabla E|^2}{4E^2}\phi^2,        \]
and then by the Hardy-type inequality (see \cite{Cowan_hardy})
\[  \int | \nabla \phi|^2 \ge \int \frac{ | \nabla E|^2}{4 E^2} \phi^2 \qquad \forall \phi \in C_c^\infty(\Omega),\]
we get
\[\int_\Omega \frac{b^2}{4}\phi^2\le \gamma^2\int | \nabla \phi|^2.\]
Using this in (\ref{ineguality-general1}) we get (\ref{c2}). One can
now proceed as in the the proof of Theorem \ref{tg} to get (\ref{E}).
\qed\\


\end{document}